\documentclass{amsart}
\usepackage{graphicx} 
\usepackage{amsfonts}
\usepackage{amsmath}
\usepackage{amsthm,color}
\usepackage{tikz}
\usepackage{bbm}
\usepackage[hidelinks,plainpages=false,pdfpagelabels]{hyperref}
\usepackage{thmtools}
\usepackage[capitalise, noabbrev]{cleveref}
\usepackage[margin=1in]{geometry}
\newtheorem{thm}{Theorem}
\numberwithin{thm}{section}
\newtheorem{prop}[thm]{Proposition}
\newtheorem{lem}[thm]{Lemma}
\newtheorem{cor}[thm]{Corollary}

\theoremstyle{remark}
\newtheorem{rem}[thm]{Remark}
\theoremstyle{definition}
\newtheorem{defn}[thm]{Definition}
\newtheorem{eg}[thm]{Example}

\newtheorem{lthm}{Theorem} 

\newcommand{\Diag}{\mathrm{Diag}}
\newcommand{\Pic}{\mathrm{Pic}}
\newcommand{\Div}{\mathrm{Div}}
\newcommand{\im}{\mathrm{im}}
\newcommand{\ZZ}{\mathbb{Z}}
\definecolor{Green}{rgb}{0.0, 0.5, 0.0}

\hypersetup{
 colorlinks=true,
 linkcolor=blue,
 filecolor=blue,
 citecolor=olive,
 urlcolor=orange,
 pdftitle={Non-equal characteristic conjecture},
 }

\author[D.~Labib]{Daniel Labib}
\address[Labib]{Department of Mathematics and Statistics\\University of Ottawa\\
150 Louis-Pasteur Pvt\\
Ottawa, ON\\
Canada K1N 6N5}
\email{dlabi091@uottawa.ca}

\author[A.~Lei]{Antonio Lei}
\address[Lei]{Department of Mathematics and Statistics\\University of Ottawa\\
150 Louis-Pasteur Pvt\\
Ottawa, ON\\
Canada K1N 6N5}
\email{antonio.lei@uottawa.ca}

\title{Recovering Laplacian Lattices from $L$-Functions of Graphs}
\subjclass[2020]{Primary: 05C50 Secondary: 05C25, 05C60}
\keywords{graph Jacobians; Laplacian lattices; Lorenzini zeta functions; $L$-functions of graphs; Riemann–Roch theory}
\begin{document}

\maketitle

\begin{abstract}
    We introduce $L$-functions associated with characters of the Jacobian of a finite graph, as a graph-theoretic analogue of the $L$-functions arising from unramified coverings of algebraic curves. These $L$-functions are defined using the Riemann--Roch structure on the graph and extend Lorenzini's two-variable zeta function. We show that if two graphs without bridges have isomorphic Jacobians and their $L$-functions agree under the induced correspondence of characters, then their Laplacian lattices coincide. We also show that Lorenzini's zeta function is invariant under contraction of bridges, explaining the necessity of the bridge-free hypothesis in the main theorem. Finally, we give examples showing that neither the Jacobian nor the Lorenzini zeta function alone determine the Laplacian lattice.
\end{abstract}
\section{Introduction}
Throughout this article, the term “graph” will refer exclusively to undirected, connected, finite multigraphs without loops.
The Jacobian of a graph provides a natural combinatorial analogue of the Jacobian variety associated with an algebraic curve. For a graph $X$, its Jacobian $J_X$ is defined as the degree-zero subgroup of the Picard group which can be realized as the quotient of the group of divisors by the image of the graph Laplacian. The resulting group $J_X$ is a finite abelian group whose cardinality coincides with the number of spanning trees of $X$. In \cite{BN07}, Baker and Norine established that the divisor theory on graphs satisfies a Riemann–Roch theorem and admits Abel–Jacobi maps, thereby yielding striking combinatorial counterparts to the classical constructions in algebraic geometry.
In \cite{zini12}, Lorenzini constructed a two-variable zeta function for graphs, which is of the form
\[
\zeta(t,u,X)=\frac{f(t,u)}{(1-t)(1-ut)}
\]
for some polynomial $f(t,u)\in\mathbb{Z}[t,u]$. The function encodes important arithmetic information about the graph. For example, $f(1,u)=|J_X|$. An important feature of this construction is that the zeta function depends only on the \textbf{\textit{Laplacian lattice}}, that is, the image of the Laplacian operator on $\ZZ^{V(X)}$, where $V(X)$ is the set of vertices of $X$, as a $\ZZ$-morphism. 

In \cite{BV}, Booher and Voloch proved that a smooth hyperbolic projective curve over a finite field can be recovered from the $L$-functions attached to the Hilbert class field of the curve and its constant field extensions. Motivated by the deep analogy between the arithmetic of curves and the theory of graphs, one is led to ask whether analogous phenomena also arise in the setting of graphs.

To address this question, we begin by introducing the $L$-function $L(t,u,X,\chi)$ associated with a character $\chi$ of $J_X$, which depends on the choice of a degree-one divisor $D_1$. Our construction arises from a slight modification of Lorenzini’s zeta function and is directly analogous to the corresponding definitions for curves. See \cref{def:L}.
The trivial character recovers Lorenzini's zeta function, while the nontrivial characters provide additional information that is invisible to the zeta function alone.
Interestingly, unlike the $L$-functions for curves, Artin formalism does not hold for all graphs; see \cref{rem:noAF} for a detailed discussion.

The main theorem of this article is:

\begin{lthm}[\cref{mainthm}]\label{thmA}
    Let $X$ and $X'$ be graphs without bridges and fix $D_1\in \Pic^1(X)$ and $D_1'\in \Pic^1(X')$. Let $\phi:J_{X'}\to J_X$
be an isomorphism such that for all characters $\chi$ of $J_X$, the $L$-functions defined using $D_1$ and $D_1'$ satisfy
\[
L(u,t,X,\chi) = L(u,t,X',\chi\circ\phi).
\]
Then, the Laplacian lattices of $X$ and $X'$ coincide for a certain ordering of the vertices.
\end{lthm}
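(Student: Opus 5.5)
The plan is to use Fourier inversion on $J_X$ to turn the hypothesis into a degree- and rank-preserving isomorphism of Picard groups. Then I recover the vertices as the effective classes of degree one, and read off the Laplacian lattice as the kernel of the natural map $\ZZ^{V}\to\Pic$. I am working from the expected shape of \cref{def:L}, which I have not seen. I assume the coefficient of $t^d$ in $L(t,u,X,\chi)$ is a sum over $E\in J_X$ of $\chi(E)$ times a fixed function of $u$ that depends only on $r(dD_1+E)$, for example $(u^{r(dD_1+E)+1}-1)/(u-1)$. The argument depends on that assumption being right.

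\textbf{Step 1 (Fourier inversion).} Fix $d$ and compare coefficients of $t^d$ on both sides of the hypothesis. Set $g_d(E)=F_{r(dD_1+E)}(u)$ and $g'_d(E')=F_{r(dD'_1+E')}(u)$, where $F_r(u)$ is the weight attached to rank $r$. The hypothesis says $\sum_{E}\chi(E)g_d(E)=\sum_{E'}\chi(\phi(E'))g'_d(E')$ for every character $\chi$ of $J_X$. Since $\phi$ is an isomorphism, reindexing gives $\widehat{g_d}=\widehat{g'_d\circ\phi^{-1}}$. Orthogonality of characters of the finite abelian group $J_X$ then gives $g_d=g'_d\circ\phi^{-1}$ pointwise, with values in $\ZZ[u]$. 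The weights $F_r$ are distinct for distinct $r$, so
\[
r(dD_1+\phi(E'))=r(dD'_1+E')\quad\text{for all } d \text{ and all } E'\in J_{X'}.
\]
Now define $\psi:\Pic(X')\to\Pic(X)$ by $dD'_1+E'\mapsto dD_1+\phi(E')$. This is a group isomorphism, because $\Pic=\ZZ D_1\oplus J$ on each side, and it preserves both degree and rank.

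\textbf{Step 2 (recovering the vertices).} A class of degree $1$ has $r\ge 0$ exactly when it contains an effective divisor, that is, when it equals $[v]$ for some vertex $v$. So $\psi$ restricts to a bijection between $\{[v']: v'\in V(X')\}$ and $\{[v]:v\in V(X)\}$. Because $X$ and $X'$ have no bridges, the Abel--Jacobi map $v\mapsto[v]$ is injective: if $[v]=[w]$ with $v\neq w$, there is a rank-one degree-one class, which forces a cut edge. This is the main place the bridge-free hypothesis enters, and the main obstacle. I would prove it directly by showing that $v-w=\Delta f$ forces the level sets of $f$ to be separated by a single edge, or else cite the corresponding statement in \cite{BN07}. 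With injectivity, $\psi$ induces a bijection $\sigma:V(X')\to V(X)$ with $\psi([v'])=[\sigma(v')]$, and in particular $|V(X)|=|V(X')|$.

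\textbf{Step 3 (the lattice).} The Laplacian lattice of $X$ is the kernel of $\pi:\ZZ^{V(X)}\to\Pic(X)$, $e_v\mapsto[v]$, and similarly for $X'$. Identify $\ZZ^{V(X')}$ with $\ZZ^{V(X)}$ via $\sigma$. Then $\psi\circ\pi'=\pi\circ\sigma$ holds on basis vectors, and hence everywhere by linearity. Since $\psi$ is injective, $\ker\pi'$ is carried onto $\ker\pi$, so the two Laplacian lattices coincide under the vertex ordering given by $\sigma$.
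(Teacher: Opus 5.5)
Your proposal is correct and is essentially the paper's proof. Your $\psi$ is the paper's extension $\tilde\phi$ from \cref{extend-iso}, and your guessed weight $(u^{r+1}-1)/(u-1)$ is exactly the paper's $(u^{h}-1)/(u-1)$, since $h=r+1$. Your Fourier inversion is the paper's linear-independence-of-characters step giving $h(D)=h(\tilde\phi(D))$, and your Steps 2--3 are the paper's vertex identification followed by \cref{lem:same}; you are slightly more careful than the paper in obtaining a bijection $V(X')\to V(X)$ by using rank preservation in both directions.

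One minor slip: your aside that $[v]=[w]$ with $v\neq w$ produces a rank-one degree-one class is false. For example, two triangles joined by a bridge $vw$ have $[v]=[w]$ but no degree-one class of rank one. For injectivity, rely instead on your level-set argument or on \cite[Theorem~1.8]{BN07}, as the paper does via \cref{cor:inj}.
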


The proof proceeds by exploiting the dependence of the $L$-functions on the Riemann--Roch structure. From the equality of $L$-functions, we can compare the values of the Riemann--Roch function on divisor classes of $X$ and $X'$. We show that this leads to a bijection of the effective degree-one classes. When the graphs are 2-edge-connected, the Abel--Jacobi map identifies the vertices with these degree-one classes. This, in turn, forces the two Laplacian lattices to coincide.
The hypothesis that the graphs have no bridges is natural from the perspective of Lorenzini's zeta function. In fact, contracting all bridges does not change the zeta function; see \cref{prop:contraction}.

We emphasize that neither the Jacobian nor the Lorenzini zeta function alone contains enough information to recover the Laplacian lattice. See \cref{rk:genuine} and \cref{rk:sameLorenzini} where explicit examples are given, illustrating that the additional information encoded in the $L$-functions is genuinely necessary to recover the Laplacian lattice.

In \cref{cor:iso}, we show that if we further assume that the graphs $X$ and $X'$ in \cref{thmA} are saturated (see \cref{def:saturated}), then they are in fact isomorphic. However, there exist non-isomorphic graphs with the same Laplacian lattice (see \cref{eg:non-iso}). Manjunath \cite{manjunath} showed that the Delaunay triangulation associated with the Laplacian lattice contains complete information about the underlying graph up to isomorphism. This raises the natural question of what input in addition to the $L$-functions is required to determine a graph without bridges up to isomorphisms.

Finally, we note that there is extensive literature \cite{Ihara,Bass,StarkTerras1996,StarkTerras2000,StarkTerras2007,Terras,MizunoSato} on zeta and $L$-functions associated with graph coverings, beginning with the Ihara zeta function and its generalizations. Our $L$-functions are of a different nature: they are defined using the Riemann--Roch structure on the Jacobian rather than via prime cycles and graph coverings. To the best of our knowledge, there is no obvious relation between these two types of $L$-function.

\subsection*{Organization}
The article is organized as follows. In \cref{sec:notions}, we recall the necessary notions from the divisor theory of graphs, including the Picard group, Jacobian, Riemann--Roch structure, and Abel--Jacobi map. In \cref{sec:functions}, we review Lorenzini's two-variable zeta function and introduce the $L$-functions associated with characters of the Jacobian. Finally, in \cref{sec:mainproof}, we prove the main theorem recovering the Laplacian lattice from the collection of $L$-functions.

\subsection*{Acknowledgement}
We thank Milo Knysh and Felipe Voloch for valuable discussions during the preparation of this article. This research was supported by an NSERC Alliance International Grant (ALLRP 613854-25). DL also received an NSERC Undergraduate Student Research Award while this work was being carried out. 

\subsection*{Statement on the use of AI}
During the preparation of this article, AI has been used to check for typos, grammatical mistakes, and English phrasing. Some examples have been found with the help of Claude AI and Gemini. All mathematical statements and assertions about them have been checked by a human and/or SageMath.

\subsection*{Statement on conflict of interest}
On behalf of all authors, the corresponding author states that there is no conflict of interest. 

\subsection*{Data availability statement}
Data sharing not applicable to this article as no datasets were generated or analyzed during the current study.

\section{Preliminaries on notions from graph theory}\label{sec:notions}
 Given a graph $X$, we denote the set of vertices by $V(X)$ and the set of edges by $E(X)$.

\begin{defn}
    A graph is said to be \textbf{\textit{$k$-vertex-connected}}, respectively \textbf{\textit{$k$-edge-connected}}, if after removing any $k-1$ vertices, respectively edges, the graph is connected. A bridge is an edge such that if removed, the graph is disconnected.
\end{defn}

\begin{defn}
Let $X$ be a graph with an ordering on the vertices $V(X)=\{v_1,\dots,v_n\}$.
We define the Laplacian matrix of $X$ as $L_X=D_X - A_X$, where $D_X=\Diag(\deg(v_1),\dots,\deg(v_n))$ is the diagonal matrix whose diagonal entries are the degrees of the vertices of $X$ and $A_X$ is the adjacency matrix of $X$ with respect to the basis $(v_1,\dots, v_n)$.

We define $\Div(X)$ as the free abelian group generated by $V(X)$. Elements of $\Div(X)$ will be called \textbf{\textit{divisors}} of $X$. The degree of a divisor $D=\sum_{i=1}^n a_i v_i \in \Div(X)$ is defined as $\deg(D)=\sum_{i=1}^n a_i$. We denote the set of divisors of degree $k$ by $\Div^{(k)}(X)$. A divisor $E=\sum_{i=1}^n a_i v_i$ is called \textbf{\textit{effective}} if $a_i \ge 0$ for all $i$ and we will write $E\ge0$. We write $\Div_+^{(k)}(X)$ for the set of effective divisors of degree $k$.

We define the \textbf{\textit{Picard group}} of X as the quotient group $$\Pic(X)=\frac{\Div(X)}{\im(L_X)},$$ where $\im(L_X)$, which we will call the \textbf{\textit{Laplacian lattice}} of $X$, denotes the image of the Laplacian matrix as a $\ZZ$-endomorphism on $\Div(X)$. We will write $A\sim B$ ($A$ is equivalent to $B$) when the divisors $A$ and $B$ have the same image in the Picard group. The set of effective divisors equivalent to $D$ is denoted by $|D|$. Given $D\in\Div(X)$, we write $[D]$ for its image in $\Pic(X)$.

Since $\im(L_X)$ contains only degree 0 divisors, the degree of an equivalence class of divisors is well defined in the quotient group. Therefore, we can define $\Pic^{k}(X)$ as the classes of divisors in $\Pic(X)$ of degree $k$. We will call $J_X:=\Pic^0(X)$ the \textbf{\textit{Jacobian}} of $X$ (note that this is a group). 
\end{defn}

\begin{rem}\label{rk:equivalence}
    It is clear from the definition that if $D$ and $D'$ are two divisors of $X$ such that $D\sim D'$, then $|D|=|D'|$. Given a class $[D]\in \Pic(X)$, we shall write $|[D]|=|D|$.
\end{rem}

\begin{rem}
When $X$ is connected, the order of the group $J_X$ is equal to the number of spanning trees of $X$.
\end{rem}

We recall the following definition of \textbf{\textit{Riemann--Roch structure}} from \cite{BN07}.
\begin{defn} Let $X$ be a graph. The Riemann--Roch structure on $X$ is defined as $h: \Div(X) \to \{0, 1, 2,...\}$, where
\[
h(D) = \min\left\{\deg(E):E\in\Div_+(X), |D-E|= \emptyset \right\}.
\]

We define the \textbf{\textit{genus}} of $X$ as $g= |E(X)|-|V(X)| +1$, and the\textbf{\textit{ canonical divisor}} of $X$ as the divisor $K=\sum\limits_{v\in V(X)} (\deg(v)-2)v\in \Div(X)$.
\end{defn}

Note that $h(D)=0$ if and only if $D$ is not equivalent to an effective divisor, and, if $D_1\sim D_2$, then $h(D_1)=h(D_2)$. In other words, $h$ is a well-defined function on $\Pic(X)$, satisfying $h([D])=h(D)$.

\begin{thm}[Riemann--Roch for graphs]\label{thm:RR} Let $X$ be a graph. Then, for all $D\in\Div(X)$.
    \[
    h(D) - h(K-D) = \deg(D)- g+1.
    \]
\end{thm}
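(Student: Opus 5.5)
The approach is the one of Baker and Norine \cite{BN07}. The proof reduces to a formal statement about a distinguished family $\mathcal{N}\subset\Div^{(g-1)}(X)$ of ``maximal non-effective'' divisors, together with two combinatorial facts about graphs that are proved using acyclic orientations and $q$-reduced divisors.

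\emph{Combinatorial input.} For an acyclic orientation $\mathcal{O}$ of $X$, set
\[
\nu_{\mathcal{O}}=\sum_{v\in V(X)}\big(\mathrm{indeg}_{\mathcal{O}}(v)-1\big)v .
\]
Then $\deg\nu_{\mathcal{O}}=|E(X)|-|V(X)|=g-1$. Since every edge contributes exactly one to the in-degree of its head and one to the out-degree of its tail, we have $\nu_{\mathcal{O}}+\nu_{\overline{\mathcal{O}}}=K$, where $\overline{\mathcal{O}}$ is the reversed orientation. Let $\mathcal{N}$ be the set of all $\nu_{\mathcal{O}}$; the previous identity shows that $K-\nu\in\mathcal{N}$ whenever $\nu\in\mathcal{N}$.

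Two facts are then needed.
\begin{enumerate}
\item[(a)] $h(\nu)=0$ for every $\nu\in\mathcal{N}$. Suppose $E\sim\nu_{\mathcal{O}}$ with $E\ge0$. Write $\nu_{\mathcal{O}}=E+L_Xf$ and consider the set of vertices where $f$ attains its maximum. Counting edges leaving this set gives a contradiction with acyclicity.
\item[(b)] If $h(D)=0$, then $D\le\nu$ for some $\nu\in\mathcal{N}$, after replacing $D$ by an equivalent divisor. To see this, fix $q$ and take the $q$-reduced representative $D'$ of $D$, whose existence and uniqueness follow from the usual firing argument. Then $D'(v)\ge0$ for $v\ne q$, and since $h(D)=0$ we get $D'(q)<0$. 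Running Dhar's burning algorithm from $q$ burns the whole graph, and the order of burning defines an acyclic orientation $\mathcal{O}$ with unique source $q$. Reducedness means each $v\ne q$ has $D'(v)<$ (number of burnt edges into $v$ at the time it burns), that is, $D'(v)\le\mathrm{indeg}_{\mathcal{O}}(v)-1$. At the source we have $D'(q)\le-1=\mathrm{indeg}_{\mathcal{O}}(q)-1$. Hence $D'\le\nu_{\mathcal{O}}$.
\end{enumerate}

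\emph{Formula for $h$.} The key intermediate claim is
\[
h(D)=\min_{D'\sim D,\ \nu\in\mathcal{N}}\deg^+(D'-\nu),
\]
where $\deg^+$ denotes the sum of the positive coefficients.

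For ``$\le$'': take $D'\sim D$ and $\nu\in\mathcal{N}$, and let $E=(D'-\nu)^+$. Then $D'-E\le\nu$, so $h(D-E)=0$ by (a). By the definition of $h$ this gives $h(D)\le\deg E=\deg^+(D'-\nu)$.

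For ``$\ge$'': choose $E\ge0$ with $\deg E=h(D)$ and $|D-E|=\emptyset$. By (b) there are $D''\sim D-E$ and $\nu\in\mathcal{N}$ with $D''\le\nu$. Put $D'=D''+E\sim D$. Then $D'-\nu\le E$, so $\deg^+(D'-\nu)\le\deg E=h(D)$.

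This step is the main obstacle, since it is the only point where (a) and (b) interact with the min--max definition of $h$. In particular, care is needed with the fact that the minimum over $E$ in the definition of $h$ is realised by a divisor compatible with (b).

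\emph{Conclusion.} For any divisor $A$ we have $\deg^+(A)-\deg^+(-A)=\deg A$. Apply this to $A=D'-\nu$, and note that
\[
-A=(K-D')-(K-\nu).
\]
As $(D',\nu)$ ranges over pairs with $D'\sim D$ and $\nu\in\mathcal{N}$, the pair $(K-D',K-\nu)$ ranges over pairs with $K-D'\sim K-D$ and $K-\nu\in\mathcal{N}$, bijectively, because $\mathcal{N}$ is closed under $\nu\mapsto K-\nu$. The difference $\deg^+(A)-\deg^+(-A)$ equals the constant $\deg D-(g-1)$ for every such pair. Therefore minimising $\deg^+(A)$ and minimising $\deg^+(-A)$ happen at the same pairs, and the two minima differ by this constant. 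By the formula for $h$, this gives
\[
h(D)-h(K-D)=\deg D-g+1,
\]
which is the Riemann--Roch theorem.
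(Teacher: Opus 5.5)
Your proposal is correct and follows the Baker--Norine argument: acyclic orientations and the divisors $\nu_{\mathcal{O}}$, $q$-reduced divisors with Dhar's burning, the formula $h(D)=\min\deg^+(D'-\nu)$, and the involution $\nu\mapsto K-\nu$ on $\mathcal{N}$. This is the same approach the paper relies on, since the paper proves the theorem only by citing \cite[Theorem~1.12]{BN07}; the caveat you raise in the formula step is unnecessary, because your ``$\ge$'' argument works as written for any $E$ realising the minimum in the definition of $h$.
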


\begin{proof}
    See \cite[Theorem~1.12]{BN07}.
\end{proof}

\begin{defn}
    Fixing a base-point $v_0$, we  define the Abel--Jacobi map,
    \begin{align*}
        S_{v_0}^{(k)} : \Div^{(k)}_+ &\to J_X\\
        D &\mapsto D-kv_0
    \end{align*}
 \end{defn}
   We shall suppress the subscript and write $S$ instead of $S_{v_0}$ for simplicity when there is no ambiguity. Note that if $D\sim D'$, then $S^{(k)}(D)=S^{(k)}(D')$, where $k=\deg(D)$. We recall the following fundamental properties of the Abel--Jacobi map.

\begin{thm}
    The map $S^{(k)}$ is surjective if and only if $k\ge g$.
\end{thm}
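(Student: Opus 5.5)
The proof rests on one observation: $S^{(k)}$ is surjective exactly when every class of degree $k$ contains an effective divisor. Indeed, $[D]\mapsto [D]-k[v_0]$ is a bijection $\Pic^k(X)\to J_X$. So $S^{(k)}$ is surjective if and only if $|D|\neq\emptyset$ for every $D\in\Div^{(k)}(X)$, that is, if and only if $h(D)\ge 1$ for all such $D$. I will prove the two directions separately, using \cref{thm:RR} for one and an explicit construction for the other.

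\emph{The case $k\ge g$.} Let $D$ have degree $k$. Since $h\ge 0$, \cref{thm:RR} gives
\[
h(D)=k-g+1+h(K-D)\ge k-g+1\ge 1.
\]
Hence $D$ is equivalent to an effective divisor, and $S^{(k)}$ is surjective.

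\emph{The case $k<g$.} If $k<0$ there are no effective divisors of degree $k$, so $S^{(k)}$ is not surjective. For $0\le k\le g-1$ it suffices to find one divisor $\nu$ of degree $g-1$ with $|\nu|=\emptyset$. Given such $\nu$, the divisor $D=\nu-(g-1-k)v_0$ has degree $k$ and $|D|=\emptyset$. Otherwise, if $E\sim D$ with $E\ge0$, then $E+(g-1-k)v_0$ would be an effective divisor equivalent to $\nu$.

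To build $\nu$, order the vertices $v_1,\dots,v_n$ and orient every edge from the lower-indexed endpoint to the higher one. This gives an acyclic orientation $O$. Set
\[
\nu=\sum_{v}\bigl(\mathrm{indeg}_O(v)-1\bigr)v,
\]
which has degree $|E(X)|-|V(X)|=g-1$. Suppose $\nu\sim E\ge 0$, so $E=\nu-L_Xf$ for some $f\in\ZZ^{V(X)}$. Note that $(L_Xf)(v)=\sum_{w\sim v}(f(v)-f(w))$, summed over edges. Let $A$ be the set of vertices where $f$ attains its maximum $M$.
\begin{itemize}
\item The orientation restricted to the subgraph induced on $A$ is still acyclic, so some $v\in A$ has no incoming edges from inside $A$.
\item Every incoming edge at $v$ therefore comes from some $w\notin A$, and for such $w$ we have $f(v)-f(w)\ge 1$.
\item All other terms in $(L_Xf)(v)$ are nonnegative, because $f(v)=M$ is maximal. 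Hence $(L_Xf)(v)\ge \mathrm{indeg}_O(v)$.
\end{itemize}
It follows that $E(v)\le \mathrm{indeg}_O(v)-1-\mathrm{indeg}_O(v)=-1$, contradicting $E\ge0$. This argument also covers constant $f$: then $A=V(X)$ and $v$ is a source of $O$.

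The main obstacle is this second direction, since Riemann--Roch alone gives only lower bounds on $h$. It requires producing a non-effective class of degree $g-1$, and the acyclic-orientation divisor together with the maximum-set argument does exactly that.
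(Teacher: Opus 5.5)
Your proof is correct. The paper gives no argument of its own for this statement; it only cites \cite[Theorem~1.7]{BN07}. What you have written is a self-contained version of the standard Baker--Norine argument, in three steps:
\begin{itemize}
\item You reformulate surjectivity of $S^{(k)}$ as the condition that every class in $\Pic^k(X)$ contains an effective divisor.
\item You settle $k\ge g$ as the immediate consequence of Riemann--Roch (\cref{thm:RR}).
\item For $k<g$, shifting by $(g-1-k)v_0$ reduces the problem to exhibiting a single non-effective class of degree $g-1$. You supply it with the acyclic-orientation divisor $\nu_O$ and the maximum-set argument.
\end{itemize}
All the details check out, including the degenerate cases $k<0$ and constant $f$.

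The citation buys brevity. Your version makes explicit what is actually needed. The only external input is \cref{thm:RR}, which the paper states anyway. The converse direction is an elementary maximum-principle argument that does not use Riemann--Roch at all. The divisors $\nu_O$ you construct are exactly the family Baker and Norine use in proving Riemann--Roch itself.
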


\begin{proof}
    See \cite[Theorem~1.7]{BN07}.
\end{proof}

\begin{thm}\label{thm:inj}
    The map $S^{(k)}$ is injective if and only if $X$ is (k+1)-edge-connected.
\end{thm}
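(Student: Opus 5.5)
The plan is to reformulate injectivity of $S^{(k)}$ as a statement about linear equivalence of effective divisors, and then prove the two directions separately using edge cuts of $X$. Since $S^{(k)}(D)=S^{(k)}(D')$ if and only if $D-kv_0\sim D'-kv_0$, i.e.\ $D\sim D'$, the map $S^{(k)}$ is injective exactly when no two distinct effective divisors of degree $k$ are linearly equivalent. Two divisors are equivalent precisely when $D'=D-L_Xf$ for some $f\in\ZZ^{V(X)}$. I will read $L_Xf$ as a divisor via $(L_Xf)(v)=\sum_{e=vw}(f(v)-f(w))$, the sum running over edges at $v$. Throughout, for a subset $A\subseteq V(X)$, I write $\delta(A)$ for the set of edges with exactly one endpoint in $A$.

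\emph{Not $(k+1)$-edge-connected implies not injective.}
\begin{enumerate}
\item If $X$ is not $(k+1)$-edge-connected, choose a proper nonempty $A\subset V(X)$ with $m:=|\delta(A)|\le k$. Since $X$ is connected, $m\ge 1$.
\item Let $D$ be the sum, over the edges of $\delta(A)$, of their endpoints in $A$, plus $(k-m)$ copies of some fixed vertex $a\in A$. This is an effective divisor of degree $k$.
\item Firing the set $A$ means taking $D'=D-L_X\mathbbm{1}_A$. This moves one chip across each edge of $\delta(A)$, so $D'$ is the sum of the endpoints in $V(X)\setminus A$ of the edges of $\delta(A)$, plus $(k-m)a$.
\item $D'$ is effective, has degree $k$ and is equivalent to $D$. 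It differs from $D$ because $m\ge1$ and the two divisors have different support on $V(X)\setminus A$. Hence $S^{(k)}$ is not injective.
\end{enumerate}

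\emph{Not injective implies not $(k+1)$-edge-connected.}
\begin{enumerate}
\item Suppose $D\ne D'$ are effective of degree $k$ with $D'=D-L_Xf$. Since $D\ne D'$, $f$ is not constant.
\item Let $M=\max f$ and $A=\{v:f(v)=M\}$. Then $A$ is a proper nonempty subset of $V(X)$.
\item For $v\in A$, every term $f(v)-f(w)$ in $(L_Xf)(v)$ is nonnegative. Each edge from $v$ leaving $A$ contributes at least $1$, so $(L_Xf)(v)\ge|\{e\in\delta(A): v\in e\}|$.
\item Effectivity of $D'$ gives $D(v)\ge(L_Xf)(v)$. Summing over $v\in A$ yields
\[
k=\deg D\ \ge\ \sum_{v\in A}D(v)\ \ge\ |\delta(A)|.
\]
\item Removing the at most $k$ edges of $\delta(A)$ disconnects $X$, so $X$ is not $(k+1)$-edge-connected.
\end{enumerate}

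The main point requiring care is the second direction. The key idea there is to look at the level set where $f$ attains its maximum: at those vertices the Laplacian is bounded below by the cut size, and the chips of $D$ located in $A$ must pay for all of it. Beyond that, I only need to handle signs consistently, and the choice of $\max f$ rather than $\min f$ is what makes them work. The first direction is a direct construction and involves no obstacle.
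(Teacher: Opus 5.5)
Your proof is correct. The paper gives no argument of its own here: it simply cites \cite[Theorem~1.8]{BN07}. So the comparison is between a bare citation and your complete, self-contained chip-firing proof.

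Your argument works in two directions. First, firing the vertex set $A$ of a cut with $|\delta(A)|\le k$ moves one chip across each cut edge, which produces two distinct equivalent effective divisors of degree $k$. Conversely, if $D'=D-L_Xf$ with $f$ nonconstant, integrality of $f$ forces $(L_Xf)(v)$ to be at least the number of cut edges at each $v$ in the maximum level set $A$. Hence the chips of $D$ on $A$ pay for the whole cut, and $|\delta(A)|\le k$.

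What your route buys is independence from the reference. It uses only the definition of $L_X$ and the integrality of $f$, and it is valid verbatim for the multigraphs the paper works with, since parallel edges are counted separately in $\delta(A)$. The citation buys only brevity.

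Two small points:
\begin{itemize}
\item The opening step, that failure of $(k+1)$-edge-connectivity yields a proper nonempty $A$ with $|\delta(A)|\le k$, deserves one line. If deleting a set $W$ of at most $k$ edges disconnects $X$, take $A$ to be the vertex set of one component of the resulting graph; then $\delta(A)\subseteq W$.
\item Your closing remark overstates the role of using $\max f$. The minimum level set works equally well, with $D'$ (also of degree $k$) paying for the cut instead of $D$.
\end{itemize}
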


\begin{proof}
    See \cite[Theorem~1.8]{BN07}.
\end{proof}

\begin{cor}\label{cor:inj}
    The natural map $i: V(X) \to \Pic(X)$ is injective if and only if $X$ is 2-edge-connected.
\end{cor}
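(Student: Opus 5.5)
The plan is to read \cref{cor:inj} off \cref{thm:inj} with $k=1$, after matching the map $i$ with the Abel--Jacobi map $S^{(1)}$. The natural map $i$ sends a vertex $v$ to the class $[v]\in\Pic(X)$. Fix any base point $v_0\in V(X)$. Then
\[
\Div^{(1)}_+(X)=\{v : v\in V(X)\},
\]
because an effective divisor of degree one is a single vertex with coefficient $1$. On this set $S^{(1)}(v)=[v-v_0]\in J_X$.

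The key step is that translation by $-[v_0]$ is a bijection $\Pic^1(X)\to J_X$, and $i$ takes values in $\Pic^1(X)$. Hence
\[
S^{(1)}=\tau\circ i,\qquad \tau([D])=[D-v_0],
\]
with $\tau$ a bijection. It follows that $i$ is injective if and only if $S^{(1)}$ is injective. Because the vertices are exactly the elements of $\Div^{(1)}_+(X)$, the injectivity of $S^{(1)}$ as a map on divisors is the same statement as the injectivity of $i$ on $V(X)$. Applying \cref{thm:inj} with $k=1$ then shows this happens if and only if $X$ is $2$-edge-connected.

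The only point that needs care is the bookkeeping. We must check that \cref{thm:inj} concerns injectivity on the set $\Div^{(1)}_+(X)$ itself and not on equivalence classes of divisors, and that this set really is $V(X)$. Once that identification is made, the corollary follows immediately. I do not expect any genuine obstacle. As a sanity check on one direction: if $e=uw$ is a bridge with sides $A\ni u$ and $B\ni w$, then firing all vertices of $A$ moves exactly one chip across $e$, giving $u\sim w$, so $i$ is not injective.
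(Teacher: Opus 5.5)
Your proposal is correct and is essentially the paper's proof: identify $\Div^{(1)}_+(X)$ with $V(X)$, note that $i$ and $S^{(1)}_{v_0}$ differ by the translation $[D]\mapsto[D-v_0]$ (the paper writes $i(v)=S^{(1)}_{v_0}(v)+v_0$), and apply \cref{thm:inj} with $k=1$. Your chip-firing check that a bridge $uw$ forces $u\sim w$ is correct, but the argument does not need it.
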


\begin{proof}
    Viewing $\Div^1_+(X)$ as $V(X)$, \cref{thm:inj} tells us that $S^{(1)}_{v_0}:V(X)\to J_X$ is injective if and only if $X$ is 2-edge-connected. Furthermore, since $i(v)=S^{(1)}_{v_0}(v)+v_0$, the map $i$ is injective if and only if $S^{(1)}_{v_0}$ is injective. 
\end{proof}

\begin{rem}\label{rk:inj}
 It follows from \cref{cor:inj} that when $X$ is 2-edge-connected, we can identify $V(X)$ as a subset of $\Pic^1(X)$, and we will do so implicitly in the remainder of the article.    
\end{rem}

\section{Two-variable zeta functions and $L$-functions}\label{sec:functions}
In this section, $X$ is a fixed graph. We first recall the definition of Lorenzini's two-variable zeta function associated with a graph given in \cite{zini12}:
\begin{defn}
     We define the two-variable Lorenzini zeta function of $X$ as follows,

\[
\zeta(t,u,X) := \sum_{D\in \Pic(X)} \frac{u^{h(D)}-1}{u-1}t^{\deg(D)}.
\]
\end{defn}

\begin{rem}
Note that $\zeta(t,u,X)$ can be defined by only using the lattice generated by $L_X$. Thus, if two graphs have the same Laplacian lattice, they will have the same Lorenzini zeta function.    
\end{rem}
We recall the following description of the Lorenzini zeta function given in \cite[Proposition 3.10]{zini12}. We outline the proof, as this will subsequently facilitate the computation of the examples presented later.

\begin{prop} \label{thm:rat}
We have
\[
\zeta(t,u,X) = \frac{f(t,u)}{(1-t)(1-ut)}
\]
where $f(t,u)\in \mathbb{Z}[t,u]$. Moreover, the degree of $f(t,u)$ in $t$ is $2g$, and  $f(1,u)=|J_X|$.
\end{prop}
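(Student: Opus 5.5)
The plan is to split the sum defining $\zeta(t,u,X)$ by degree and use \cref{thm:RR} to understand $h$ in every degree. Write $N=|J_X|$, so that $|\Pic^k(X)|=N$ for every $k$. The degree of the canonical divisor is $\deg K=\sum_v(\deg v-2)=2|E|-2|V|=2g-2$.

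First I would handle the extreme degrees.
\begin{itemize}
\item If $\deg D<0$, then $D$ is not equivalent to an effective divisor, so $h(D)=0$ and these classes contribute nothing.
\item If $\deg D>2g-2$, then $\deg(K-D)<0$, so $h(K-D)=0$. \cref{thm:RR} then gives $h(D)=\deg D-g+1$, which depends only on the degree.
\end{itemize}
Hence
\[
\zeta(t,u,X)=\sum_{k=0}^{2g-2} c_k(u)\,t^k+N\sum_{k\ge 2g-1}\frac{u^{k-g+1}-1}{u-1}\,t^k,
\qquad c_k(u)=\sum_{D\in\Pic^k(X)}\frac{u^{h(D)}-1}{u-1}\in\ZZ[u].
\]

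Next I sum the tail, which is where the denominator appears. Using
\[
\frac{u^{k-g+1}-1}{u-1}=\sum_{j=0}^{k-g}u^j,
\]
or equivalently the difference of two geometric series $\frac{1}{u-1}\bigl(u^{1-g}\sum_k (ut)^k-\sum_k t^k\bigr)$, the tail equals
\[
\frac{N}{u-1}\left(\frac{u^{g}t^{2g-1}}{1-ut}-\frac{t^{2g-1}}{1-t}\right)
=\frac{N\,t^{2g-1}\,(u^g-1-u^{g}t+ut)}{(u-1)(1-t)(1-ut)}.
\]
The numerator factors as $(u^g-1)-ut(u^{g-1}-1)$, which is divisible by $u-1$. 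The quotient is a polynomial in $u,t$ of $t$-degree $1$ whenever $g\ge 2$. The cases $g=0,1$ are small and should be checked separately; for $g=0$ the tail starts at $k=-1$, where the summand is $0$ anyway. Multiplying the finite part by $(1-t)(1-ut)$ gives a polynomial, so $f\in\ZZ[t,u]$. Its $t$-degree is at most $2g$: the finite part contributes $2g-2+2$, and the tail contributes $2g-1+1$.

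To see that the $t$-degree is exactly $2g$, I would compute the coefficient of $t^{2g}$. The finite part contributes $u\,c_{2g-2}(u)$. Using the duality $h(K-D)=h(D)-g+1$ from \cref{thm:RR}, the class $K$ has $h=g$ and every other class in $\Pic^{2g-2}(X)$ has $h=g-1$. The tail contributes $-N\frac{u(u^{g-1}-1)}{u-1}$. Combining these, the leading coefficient is
\[
u\cdot\frac{u^g-u^{g-1}}{u-1}=u^{g},
\]
which is nonzero, so $\deg_t f=2g$.

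Finally, for $f(1,u)=N$, write $f=(1-t)(1-ut)\,\zeta$. At $t=1$ the finite part is killed by the factor $(1-t)$. For the tail, the factor $(1-t)$ cancels, leaving
\[
N t^{2g-1}\,\frac{(u^g-1)-ut(u^{g-1}-1)}{u-1}\Big|_{t=1}=N\,\frac{u-1}{u-1}=N.
\]

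The main obstacle is bookkeeping rather than ideas: pinning down the exact leading $t$-coefficient via the distribution of $h$ on $\Pic^{2g-2}(X)$, and handling the degenerate genera $g=0,1$ cleanly.
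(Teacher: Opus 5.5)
Your proposal is correct and follows the paper's proof. Both arguments split off the classes of degree at most $2g-2$, use Riemann--Roch to sum the tail as two geometric series, and conclude from the divisibility of $u^g-u^gt-1+ut$ by $u-1$. You also check explicitly two claims the paper's outline leaves implicit: that $\deg_t f=2g$, via the leading coefficient $u^g$ obtained from $h(K)=g$ and $h=g-1$ on the rest of $\mathrm{Pic}^{2g-2}(X)$, and that $f(1,u)=|J_X|$.
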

\begin{proof}
    Let $\kappa=|J_X|$ and
    \[
    p(t,u) = \sum_{\substack{D\in\Pic(X) \\ \deg D\le2g-2}} \frac{u^{h(D)}-1}{u-1}t^{\deg(D)}\in\mathbb{Z}[t,u].
    \]
    Theorem~\ref{thm:RR} tells us that $\deg(D)>2g-2$ implies $h(D)=\deg(D)-g+1$. For each integer $d>2g-2$, there are $\kappa$ elements in $\Pic^k(X)$. Thus, we can rewrite the Lorenzini zeta function as
    \begin{align*}
        \zeta(t,u,X) &= \sum_{D\in \Pic(X)} \frac{u^{h(D)}-1}{u-1}t^{\deg(D)} \\
        & =p(t,u)+ \sum_{\substack{D\in\Pic(X) \\ \deg D>2g-2}} \frac{u^{\deg(D)-g+1}-1}{u-1}t^{\deg(D)} \\
        & =p(t,u)+ \kappa\sum_{d=2g-1}^\infty \frac{u^{d-g+1}-1}{u-1}t^d \\
        & = p(t,u) +  \frac{\kappa}{u-1}\left(\frac{u^{g}}{1-ut}-\frac{1}{1-t}\right)t^{2g-1} \\
        & = p(t,u) +  \kappa \frac{u^g-u^gt-1+ut}{(1-ut)(1-t)(u-1)}t^{2g-1}.
    \end{align*}
    The proposition follows from the fact that $(u-1)| (u^g-u^gt-1+ut)$ over $\mathbb{Z}$.
\end{proof}

\begin{eg}\label{eg:g2}
Let $X$ be a graph and write $\kappa$ for the cardinality of $J_X$. We recall from \cite[Example 3.8]{zini12} that when $g=0$, we have 
    \[
    \zeta(t,u,X) = \frac{1}{(1-t)(1-ut)}.
    \]
    When $g=1$, we have 
    \[
    \zeta(t,u,X) = \frac{1+(\kappa-(u+1))t+ut^2}{(1-t)(1-ut)}.
    \]
    When $g=2$ and $X$ is 2-edge-connected, we have 
    \[
     \zeta(t,u,X) = \frac{1+(n-u-1)t+(2u-nu+\kappa-n)t^2+u(n-u-1)t^3+u^2t^4}{(1-t)(1-ut)}.
    \]
    Indeed, we have the following equations:
    \begin{align*}
    \sum_{D\in\Pic^0(X)} \frac{u^{h(D)}-1}{u-1}=1, \quad
    \sum_{D\in\Pic^1(X)} \frac{u^{h(D)}-1}{u-1}=n , \quad
    \sum_{D\in\Pic^2(X)} \frac{u^{h(D)}-1}{u-1}= \kappa+u .
    \end{align*}
 The second equation follows from the fact that no distinct effective divisors of degree 1 are equivalent and the third equation is a consequence of \cref{thm:RR}. This allows us to compute the polynomial $p(t,u)$ in the proof of \cref{thm:rat}, which in turn allows us to compute $\zeta(t,u,X)$.
\end{eg}

\begin{prop} \label{prop:contraction}
 Let $X$ be a graph, and let $X'$ be the graph obtained by contracting all the bridges in $X$. Then
    \[
    \zeta(t,u,X) = \zeta(t,u,X').
    \]    
\end{prop}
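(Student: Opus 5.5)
The plan is to contract one bridge at a time and show that each contraction induces a degree-preserving bijection $\Pic(X)\to\Pic(X')$ that is compatible with the Riemann--Roch function $h$. Since $\zeta(t,u,X)$ depends only on the multiset of pairs $(\deg D, h(D))$ for $D\in\Pic(X)$, this gives the equality. Contracting all bridges is an iteration of single contractions, because contracting one bridge keeps every other bridge a bridge. So by induction it suffices to treat a single bridge $e=uv$, with $X'$ the graph in which $u,v$ are merged into a vertex $w$.

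\textbf{Step 1: the contraction map on divisors.} Let $c:\Div(X)\to\Div(X')$ be the $\ZZ$-linear map sending $u,v\mapsto w$ and fixing every other vertex. It is surjective, preserves degree, and has kernel $\ZZ(u-v)$. Removing $e$ splits $V(X)$ into $A\ni u$ and $B\ni v$. Since $e$ is the only edge between $A$ and $B$, we have
\[
\sum_{x\in A} L_X(x)=u-v .
\]
Hence $\ker c\subseteq \im(L_X)$.

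\textbf{Step 2: $c$ carries the Laplacian lattice onto the Laplacian lattice.} I will check $c(\im L_X)=\im L_{X'}$.
\begin{itemize}
\item For $x\notin\{u,v\}$, a direct computation gives $c(L_X(x))=L_{X'}(x)$; the edge multiplicities to $w$ are the sums of those to $u$ and $v$.
\item For the merged vertex, $c(L_X(u))+c(L_X(v))=L_{X'}(w)$, because the bridge contributions cancel.
\item Applying $c$ to the identity in Step 1 gives $c(L_X(u))=-\sum_{x\in A\setminus\{u\}}L_{X'}(x)\in\im L_{X'}$, and then $c(L_X(v))\in\im L_{X'}$ as well.
\end{itemize}
Together with $\ker c\subseteq\im L_X$, this shows that $c$ induces a degree-preserving group isomorphism $\bar c:\Pic(X)\to\Pic(X')$.

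\textbf{Step 3: $h$ is preserved.} First I show that $|D|\neq\emptyset$ if and only if $|c(D)|\neq\emptyset$.
\begin{itemize}
\item If $D\sim E\ge 0$, then $c(D)\sim c(E)\ge0$.
\item Conversely, suppose $c(D)\sim E'\ge 0$. Every effective divisor of $X'$ is $c(E)$ for some effective $E$: put the coefficient of $w$ on $u$. Then $D-E\in c^{-1}(\im L_{X'})=\im L_X+\ker c=\im L_X$, so $D\sim E$.
\end{itemize}
Now take the minimum defining $h$. Effective divisors $E$ on $X$ map onto effective divisors on $X'$, preserving degree, and $|D-E|=\emptyset$ if and only if $|c(D)-c(E)|=\emptyset$. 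Therefore $h_X(D)=h_{X'}(c(D))$.

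\textbf{Conclusion.} Reindexing the defining sum of $\zeta(t,u,X)$ by $\bar c$ gives $\zeta(t,u,X)=\zeta(t,u,X')$. As a consistency check, the genus is unchanged, since one edge and one vertex are lost. I expect the main point needing care to be Step 2: verifying that $c(L_X(u))$ individually lies in $\im L_{X'}$. This is exactly where the bridge hypothesis enters, through the cut identity, and it would fail for a non-bridge edge.
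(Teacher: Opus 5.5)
Your proof is correct and follows the same route as the paper. Both contract one bridge at a time, show that the contraction induces a degree-preserving isomorphism $\Pic(X)\to\Pic(X/e)$ compatible with $h$, and reindex the defining sum. The only difference is that the paper cites \cite[Proposition~1.12]{caporaso} for the isomorphism and the preservation of $h$, whereas you verify both directly from the cut identity $\sum_{x\in A}L_X(x)=u-v$, which makes your argument self-contained.
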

\begin{proof}
If $X$ contains no bridge, then the proposition is immediate. Suppose $X$ contains a bridge $e$. Consider $\sigma :X\to X\backslash{e}$ the contraction of $e$. It follows from \cite[Proposition~1.12]{caporaso} that
    \begin{align*}
         \sigma_{*}:\Div(X)&\to\Div(X\backslash e) \\
    \sum_{v\in V(X)} n_v v&\mapsto \sum_{v'\in V(X\backslash e)} \left(\sum_{v\in\sigma^{-1}v'}n_v\right)v'
    \end{align*}
    induces a degree preserving isomorphism between $\Pic(X)$ and $\Pic(X\backslash e)$, denoted $\tilde\sigma_*$, that preserves the riemann-roch structure; that is $h([D])=h(\tilde\sigma_*[D])$. Therefore,
    \begin{align*}
    \zeta(t,u,X) &= \sum_{D\in \Pic(X)}\frac{u^{h(D)}-1}{u-1}t^{\deg(D)}\\
    &=\sum_{D\in\Pic(X/e)}\frac{u^{h(\tilde\sigma_*D)}-1}{u-1}t^{\deg(\tilde\sigma_*D)}\\
    &=\sum_{D\in \Pic(X/e)}\frac{u^{h(D)}-1}{u-1}t^{\deg(D)}\\
    &= \zeta(t,u,X/e).
    \end{align*}
    
    Repeating this process until no bridges remain gives $\zeta(t,u,X) = \zeta(t,u,X')$.    
\end{proof}

Inspired by the definition of the $L$-functions of covering of curves, we propose the following definition of the $L$-function associated with a character of $J_X$.

\begin{defn}\label{def:L}
Fixing a degree-one divisor $D_1\in \Pic^1(X)$, we define the $L$-function for a character $\chi : J_X \to \mathbb{C}^\times$ as follows,

\[
L(t,u,X,\chi, D_1) := \sum_{D\in \Pic(X)} \frac{u^{h(D)}-1}{u-1}\chi(D-\deg(D)D_1)t^{\deg(D)}.
\]
Here, we have regarded $D-\deg(D)D_1$ as an element in the quotient group $J_X$, so $\chi(D-\deg(D)D_1)$ is defined.
\end{defn}

\begin{prop}
    Let $D_1,D_1'\in\Pic^1(X)$. Then,
    \[
    L(t,u,X,\chi, D_1) = L(t,u,X,\chi, D_1')
    \]
    for all characters $\chi$ if and only if 
    \[
    h(D_0+dD_1)=h(D_0+dD_1')
    \]
    for all $D_0\in J_X$ and $d\ge 0$.
\end{prop}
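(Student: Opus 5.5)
The plan is to compare the two $L$-functions coefficient by coefficient in $t$. On each coefficient, Fourier analysis on the finite abelian group $J_X$ turns equality for all $\chi$ into equality of the underlying functions on $J_X$.

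First, I would note that for $\deg(D)<0$ the divisor $D$ cannot be equivalent to an effective divisor, so $h(D)=0$ and the corresponding summand vanishes. Hence both $L$-functions are formal power series in $t$ with coefficients in $\mathbb{C}[u]$, and only degrees $d\ge 0$ contribute.

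Next, fix $d\ge 0$. The map $J_X\to\Pic^d(X)$, $D_0\mapsto D_0+dD_1$, is a bijection, because $\Pic^d(X)$ is a coset of $J_X$ in $\Pic(X)$. Under this bijection, $D-\deg(D)D_1=D_0$. Writing $[a]_u:=\frac{u^a-1}{u-1}=1+u+\dots+u^{a-1}$, the coefficient of $t^d$ in $L(t,u,X,\chi,D_1)$ becomes
\[
c_d(\chi)=\sum_{D_0\in J_X}[h(D_0+dD_1)]_u\,\chi(D_0).
\]
Likewise, the coefficient of $t^d$ in $L(t,u,X,\chi,D_1')$ is $c_d'(\chi)=\sum_{D_0\in J_X}[h(D_0+dD_1')]_u\,\chi(D_0)$.

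The reverse implication is now immediate: if $h(D_0+dD_1)=h(D_0+dD_1')$ for all $D_0\in J_X$ and $d\ge0$, then $c_d(\chi)=c_d'(\chi)$ for every $d$ and every $\chi$.

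For the forward implication, suppose $c_d(\chi)=c'_d(\chi)$ for all $\chi$. I would compare coefficients of each power $u^j$, or equivalently specialize $u$ to arbitrary complex values. Then the two functions $F_d(D_0)=[h(D_0+dD_1)]_u$ and $F'_d(D_0)=[h(D_0+dD_1')]_u$ have the same Fourier transform on $J_X$. By orthogonality of characters, $F(D_0)=\frac{1}{|J_X|}\sum_\chi \hat F(\chi)\overline{\chi(D_0)}$, so $F_d=F'_d$ as $\mathbb{C}[u]$-valued functions. Finally, $a\mapsto [a]_u$ is injective on nonnegative integers; for instance, evaluating at $u=1$ gives $a$. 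Therefore $h(D_0+dD_1)=h(D_0+dD_1')$ for all $D_0$ and all $d\ge 0$.

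I do not expect a serious obstacle. The only points needing care are the reindexing of $\Pic^d(X)$ by $J_X$ via $D_1$, and applying Fourier inversion coefficientwise in $u$ rather than to scalars.
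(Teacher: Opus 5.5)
Your proposal is correct and follows essentially the same route as the paper: reindex each $\Pic^d(X)$ by $J_X$ via $D_1$ (respectively $D_1'$), compare coefficients of $t^d$, and use linear independence (orthogonality) of characters to conclude $h(D_0+dD_1)=h(D_0+dD_1')$. Your extra care about negative degrees, about working coefficientwise in $u$, and about the injectivity of $a\mapsto \frac{u^a-1}{u-1}$ spells out details the paper leaves implicit.
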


\begin{proof}
    We see that $L(t,u,X,\chi, D_1) = L(t,u,X,\chi, D_1')$ if and only if
    \[
    \sum_{d=0}^\infty\sum_{D_0\in J_X} \frac{u^{h(D_0+dD_1)}-1}{u-1}\chi(D_0)t^d=\sum_{d=0}^\infty\sum_{D_0\in J_X} \frac{u^{h(D_0+dD_1)}-1}{u-1}\chi(D_0)t^d.
    \]
    This is equivalent to
    \[
    \sum_{D_0\in J_X} (u^{h(D_0+dD_1)}-u^{h(D_0+dD_1')})\chi(D_0)=0.
    \]
    By the linear independence of characters, we deduce the desired equivalence,
    \[
    h(D_0+dD_1)=h(D_0+dD_1')
    \]
    for all $D_0\in J_X$ and $d\ge 0$.
\end{proof}

Although the definition of the $L$-function depends on the choice of a divisor $D_1$, we suppress it from the notation when the choice is understood for simplicity.

\begin{prop}\label{prop:L-poly}
If $\chi$ is a non-trivial character of $J_X$, then 
\[
L(t,u,X,\chi) =  \sum_{\substack{D\in\Pic(X) \\ \deg D\le2g-2}} \frac{u^{h(D)}-1}{u-1}\chi(D-\deg(D)D_1)t^{\deg(D)}.
\]
It is a polynomial in $t$ and $u$, and its degree in $t$ is $2g-2$.

If $\chi=\mathbbm{1}$ is the trivial character, we recover the Lorenzini zeta function:
\[
L(t,u,X,\mathbbm{1}) = \zeta(t,u,X).
\]
\end{prop}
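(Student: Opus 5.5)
Split the defining sum of $L(t,u,X,\chi)$ according to degree. Every $D\in\Pic^d(X)$ can be written uniquely as $D=D_0+dD_1$ with $D_0\in J_X$. So the coefficient of $t^d$ is
\[
c_d(u)=\sum_{D_0\in J_X}\frac{u^{h(D_0+dD_1)}-1}{u-1}\,\chi(D_0).
\]
The paper's conventions force $d\ge 0$. For $d<0$ no class contains an effective divisor, so $h=0$ and the summand vanishes.

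The first step is the range $d>2g-2$. By \cref{thm:RR}, $\deg(K-D)<0$ gives $h(K-D)=0$, hence $h(D_0+dD_1)=d-g+1$ for every $D_0\in J_X$. The factor $\frac{u^{h}-1}{u-1}$ is then constant in $D_0$ and comes out of the sum, leaving $\sum_{D_0\in J_X}\chi(D_0)$. Orthogonality of characters makes this $0$ for $\chi\neq\mathbbm{1}$, so $c_d=0$ for all $d>2g-2$. This proves the displayed formula.

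The second step is polynomiality. The remaining sum is finite, since each $\Pic^d(X)$ has $|J_X|$ elements and only $0\le d\le 2g-2$ occur. Each $\frac{u^{h}-1}{u-1}=1+u+\dots+u^{h-1}$ is a polynomial in $u$ (zero when $h=0$). Hence $L$ is a polynomial in $t$ and $u$, of $t$-degree at most $2g-2$.

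The third step, which I expect to be the main obstacle, is showing that $c_{2g-2}\neq 0$, so that the degree is exactly $2g-2$. Here I use \cref{thm:RR} with $\deg D=2g-2$, which gives $h(D)-h(K-D)=g-1$. Since $K-D\in J_X$, we have $h(K-D)\in\{0,1\}$, and it equals $1$ exactly when $K-D\sim 0$, i.e. $D=[K]$. So $h(D)=g-1$ for $D\neq[K]$ and $h(K)=g$. Writing $[K]=K_0+(2g-2)D_1$, I subtract the constant value $\frac{u^{g-1}-1}{u-1}$ from every term, which by orthogonality does not change the sum. Only the $[K]$ term then survives, giving
\[
c_{2g-2}=\chi(K_0)\Bigl(\frac{u^g-1}{u-1}-\frac{u^{g-1}-1}{u-1}\Bigr)=\chi(K_0)\,u^{g-1}\neq 0.
\]
This argument assumes $g\ge 1$. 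A nontrivial character exists only if $|J_X|>1$, which forces $g\ge1$.

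Finally, for $\chi=\mathbbm{1}$ every factor $\chi(D-\deg(D)D_1)$ equals $1$, and the sum is literally the definition of $\zeta(t,u,X)$.
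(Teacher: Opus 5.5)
Your proposal is correct and follows essentially the same route as the paper. Both write each class as $D_0+dD_1$, use Riemann--Roch and orthogonality of characters to kill the terms with $d>2g-2$, and identify the $t^{2g-2}$ coefficient as $\chi(K-(2g-2)D_1)\,u^{g-1}\neq 0$, since $[K]$ is the only degree-$(2g-2)$ class with $h=g$. Your extra remarks fill in points the paper leaves implicit: the negative-degree terms vanish, $h(K-D)\in\{0,1\}$, and $g\ge 1$ whenever a nontrivial character exists.
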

\begin{proof} The case where $\chi=\mathbbm{1}$ is clear. Suppose $\chi\ne \mathbbm{1}$ and fix a degree-one divisor $D_1$ as in \cref{def:L}.
Theorem~\ref{thm:RR} tells us that $\deg(D)>2g-2$ implies $h(D)=\deg(D)-g+1$. Thus,
    \begin{align*}
        L(t,u,X,\chi) & = \sum_{D\in \Pic(X)} \frac{u^{h(D)}-1}{u-1}\chi(D-\deg(D)D_1)t^{\deg(D)} \\
        & = \sum\limits_{d=0}^\infty \sum\limits_{D_0\in J_X}\frac{u^{h(D_0+dD_1)}-1}{u-1}\chi(D_0)t^d \\
        & = \sum\limits_{d=0}^{2g-2} \sum\limits_{D_0\in J_X}\frac{u^{h(D_0+dD_1)}-1}{u-1}\chi(D_0)t^d +  \sum\limits_{d=2g-1}^{\infty} \frac{u^{d-g+1}-1}{u-1}t^d\sum\limits_{D_0\in J_X}\chi(D_0)\\
        & = \sum\limits_{d=0}^{2g-2} \sum\limits_{D_0\in J_X}\frac{u^{h(D_0+dD_1)}-1}{u-1}\chi(D_0)t^d \\
        & = \sum_{\substack{D\in\Pic(X) \\ \deg D\le2g-2}} \frac{u^{h(D)}-1}{u-1}\chi(D-\deg(D)D_1)t^{\deg(D)},
    \end{align*}
    as desired. 
    
    To show it has degree $2g-2$ in $t$, we compute its leading coefficient. By \cref{thm:RR}, for all $D\in\Pic^{2g-2}(X)$, $h(D)=g-1$, except for the canonical dvisior $K$, for which $h(K)=g$. Thus,
    \begin{align*}
        &\ \sum_{D\in \Pic^{2g-2}(X)} \frac{u^{h(D)}-1}{u-1}\chi(D-\deg(D)D_1) \\
        = &\ \sum_{D\in \Pic^{2g-2}(X)\backslash\{K\}} \frac{u^{g-1}-1}{u-1}\chi(D-\deg(D)D_1)+\frac{u^g-1}{u-1}\chi(K-\deg(K)D_1) \\
        = &\ -\frac{u^{g-1}-1}{u-1}\chi(K-\deg (K) D_1)+\frac{u^g-1}{u-1}\chi(K-\deg(K)D_1) \\
        =&\ \chi(K-\deg(K)D_1) u^{g-1},
    \end{align*}
    which is non-zero.
\end{proof}
\begin{eg} Suppose $\chi$ is non-trivial. When $g=1$, we have
    \[
    L(t,u,X,\chi) = 1. 
    \]
    When $g=2$ and $X$ is 2-edge-connected, we have 
    \[
     L(t,u,X,\chi) = 1+\sum_{v\in V(X)} \chi(v-D_1) t + \chi(K - 2D_1) ut^2.
    \]
    Here, we have regarded $v\in V(X)$ as an element of $\Pic^1(X)$ as mentioned in \cref{rk:inj}.
\end{eg}

\begin{eg} \label{eg:genus2}
    Consider the following graph:
\begin{center}
    \begin{tikzpicture}[
    every node/.style={circle, draw, fill=black, inner sep=0pt},]
    \node (a) at (0,0) [label=below:$v_0$] {0};
    \node (b) at (2,0) [label=below:$v_1$] {1};

    \draw (a) to (b);
    \draw (a) to [bend left] (b);
    \draw (a) to [bend right] (b);

    \node[draw=none, fill=none] at (1,-1) {$X$};
    \end{tikzpicture}
\end{center}
    Since $X$ has genus 2, we have seen in \cref{eg:g2} that 
    \[
    \zeta(t,u,X) = \frac{1+t-ut+t^2+ut^3-u^2t^3+u^2t^4}{(1-t)(1-ut)}.
    \]
    There are three distinct classes in $\Pic^1(X)$, namely, $v_0,v_1$ and $2v_0-v_1$ (as representatives), and, two non-trivial characters of $J_X=\langle v_1-v_0\rangle\cong \ZZ/3\ZZ$ given by 
    \begin{align*}
        \chi_0:v_1-v_0&\mapsto\zeta_3,\\ 
        \chi_1:v_1-v_0&\mapsto\zeta_3^2,
    \end{align*}
    where $\zeta_3$ is a primitive third root of unity. We have the following $L$-functions:
    \begin{align*}
        L(t,u,X,\chi_0,v_0) &= L(t,u,X,\chi_1,v_1) = 1+(1+\zeta_3)t +\zeta_3ut^2, \\
        L(t,u,X,\chi_1,v_0) &= L(t,u,X,\chi_0,v_1) = 1+(1+\zeta_3^2)t +\zeta_3^2ut^2, \\
        L(t,u,X,\chi_0,2v_0&-v_1) = L(t,u,X,\chi_1,2v_0-v_1) = 1-t+ut^2 .
    \end{align*}
\end{eg}

\begin{rem} \label{rem:noAF}
Recall from \cite[P.158]{StarkTerras2000} that the Ihara zeta functions satisfy the Artin formalism, that is, if $Y/X$ is a Galois cover (see \cite[Definition2.3]{LM2}), then the Ihara zeta function of $Y$ admits a factorization into a product of Artin--Ihara $L$-functions associated with the representations of $\mathrm{Gal}(Y/X)$. One might ask whether something similar holds for the $L$-functions defined in \cref{def:L}, that is, 
    \begin{equation}
    \zeta(t,c_Y,Y)\stackrel?=\zeta(t,c_X,X)\prod_{\chi\ne \mathbbm{1}}L(t,c_\chi,X,\chi),
        \label{eq:Artin}
    \end{equation}
    where $Y$ is a Galois cover of $X$ with Galois group $J_X$ and $c_X,c_Y,c_\chi$ are some constants. If we let $X= C_n$ (the cycle graph with $n$ vertices), then $J_X$ is of order $n$. So, $Y$ would have to be the cyclic graph $C_{n^2}$. Since these graphs have genus $1$, it follows from the proof of \cref{prop:L-poly} that the $L$-functions for all non-trivial characters are constants. Thus, for \eqref{eq:Artin} to hold, we would have
    \begin{align*}
        \zeta(t,c_Y,Y)= \frac{1+(n^2-c_Y-1)t+c_Yt^2}{(1-t)(1-c_Yt)} =\zeta(t,c_X,X)=\frac{1+(n-c_X-1)t+c_Xt^2}{(1-t)(1-c_Xt)}
    \end{align*}
    which only holds for $c_X=c_Y$ and $n=1$. Thus, we see that Artin formalism does not hold in general.

    Furthermore, given a graph $X$, it is not clear to the authors whether there is a canonical Galois cover $Y/X$ such that $\mathrm{Gal}(Y/X)\cong J_X$. One might construct a cover $Y/X$ starting with a voltage assignment on $X$, i.e., a function $\mathbb{E}(X)\to J_X$, where $\mathbb{E}(X)$ is the set of directed edges, where each undirected edge in $X$ gives rise to exactly two directed edges going in opposite directions (see \cite[\S2.1]{LM2} for details). One may consider the canonical function that sends the directed edge $v_i\to v_j$ to $[v_j-v_i]\in J_X$. However, the resulting cover is not in general Galois. For example, if $X=C_n$, this results in $n$ disjoint copies of $C_n$. 
\end{rem}

\section{Recovering Laplacian lattices from $L$-functions}\label{sec:mainproof}
We prove \cref{thmA} in this section. We begin with the following preliminary lemmas. 

\begin{lem}\label{extend-iso}
Let $X$ and $X'$ be graphs such that there exists a group isomorphism
    $\phi:J_{X'}\to J_X$. Let $D_1$ and $D_1'$ be degree-one divisors of $X$ and $X'$, respectively. The map $\phi$ can be extended to an isomorphism from $\Pic(X')$ to $\Pic(X)$ via
    \[
    \tilde\phi([D])=\phi ([D_0]) + \deg(D)[D_1],    
    \]
    where $ D_0 = D- \deg(D)D_1'\in \Div^0(X')$.
\end{lem}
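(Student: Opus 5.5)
The plan is to use the splitting of the degree exact sequence $0\to J_X\to \Pic(X)\xrightarrow{\deg}\ZZ\to 0$, and likewise for $X'$, induced by the chosen degree-one classes. Concretely, $D\mapsto([D]-\deg(D)[D_1],\deg D)$ gives an isomorphism $\Pic(X)\cong J_X\times\ZZ$, and similarly for $X'$ with $D_1'$. Then $\tilde\phi$ is the composite of these two identifications with the isomorphism $\phi\times\mathrm{id}_{\ZZ}$, so it is an isomorphism. I would also verify the details directly, as follows.

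First I check that $\tilde\phi$ is well defined on classes. If $D\sim D'$ in $\Div(X')$, then $\deg D=\deg D'$, because $\im(L_{X'})$ consists of degree-zero divisors. Hence $D-\deg(D)D_1'$ and $D'-\deg(D')D_1'$ differ by an element of $\im(L_{X'})$ and have the same class in $J_{X'}$. Also, only the class $[D_1']$ matters, so choosing a representative of $D_1'$ is harmless. Additivity follows because both $\deg$ and $D\mapsto [D]-\deg(D)[D_1']$ are homomorphisms $\Pic(X')\to\ZZ$ and $\Pic(X')\to J_{X'}$, and $\phi$ is a homomorphism.

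$\tilde\phi$ extends $\phi$: if $\deg D=0$, then $D_0=D$ and $\tilde\phi([D])=\phi([D])$.

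For bijectivity, I write down the inverse
\[
\psi([E])=\phi^{-1}\bigl([E]-\deg(E)[D_1]\bigr)+\deg(E)[D_1'],
\]
which is well defined by the same argument. Then I check $\psi\circ\tilde\phi=\mathrm{id}$ and $\tilde\phi\circ\psi=\mathrm{id}$, using that $\deg\tilde\phi([D])=\deg D$, since $\phi([D_0])\in J_X$ has degree $0$.

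There is no real obstacle. The only point needing care is well-definedness, which rests on degree being constant on linear equivalence classes. I would also record that $\tilde\phi$ preserves degree, since that is what later arguments will use.
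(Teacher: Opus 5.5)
Your proposal is correct and follows essentially the same argument as the paper. The paper also checks well-definedness using that linearly equivalent divisors have the same degree, verifies additivity directly by writing $A=A_0+\deg(A)D_1'$, and proves bijectivity with exactly your inverse $[E]\mapsto\phi^{-1}([E]-\deg(E)[D_1])+\deg(E)[D_1']$; your splitting $\Pic(X)\cong J_X\times\ZZ$ is just a cleaner packaging of this, and the degree-preservation you record is what the proof of the main theorem uses.
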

\begin{proof} A direct calculation shows that $\tilde\phi$ is well-defined (that is, if $[D]=[\hat D]$, then $\deg (D)=\deg(\hat D)$, and so $D_0\sim \hat D_0$, which means $\tilde\phi([D])=\tilde\phi([\hat D])$). 

Given  $A, B\in \Div(X')$, with 
\begin{align*}
    A= A_0+\deg(A)D_1',\\
B=B_0+\deg(B)D_1',
\end{align*}
where $A_0,B_0\in \Div^0(X')$, we have
\begin{align*}
\tilde\phi([A+B]) &= \phi([A_0+B_0])+(\deg(A)+\deg(B))[D_1]\\
                &=\phi([A_0])+\deg(A)[D_1 ]+ \phi([B_0])+\deg(B)[D_1] \\
                &= \tilde\phi([A])+\tilde\phi([B]).
\end{align*}
Thus, $\tilde\phi$ is a group homomorphism.

 Finally, $\tilde\phi$ is an isomorphism because it is bijective, with its inverse given by 
 \begin{align*}
     \Pic(X)&\to\Pic(X')\\
    [D] &\mapsto \phi^{-1}([D_0])+\deg(D)[D_1'],
 \end{align*}
where $D_0=D-\deg(D)D_1\in\Div^0(X)$.
\end{proof}

\begin{lem} \label{lem:same}
Let $X$ and $X'$ be 2-edge-connected graphs. If there exist a group isomorphism 
$\phi:J_{X'}\to J_X$ and divisors $D_1$ and $D_1'$, as in \cref{extend-iso}, such that the extended map $\tilde\phi$ induces a bijection between $V(X)$ and $V(X')$ under the identification given by \cref{rk:inj}, then the lattices generated by $L_X$ and $L_{X'}$ are the same for a certain ordering of the vertices.
\end{lem}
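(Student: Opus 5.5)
Since $\tilde\phi$ restricts to a bijection $V(X')\to V(X)$, the two graphs have the same number $n$ of vertices. The plan is to order them compatibly, $V(X)=\{v_1,\dots,v_n\}$ and $V(X')=\{v_1',\dots,v_n'\}$ with $\tilde\phi([v_i'])=[v_i]$, and identify both $\Div(X)$ and $\Div(X')$ with $\ZZ^n$ via these bases. The goal is then to show $\im(L_{X'})=\im(L_X)$ as sublattices of $\ZZ^n$.

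First, I would lift $\tilde\phi$ to the divisor level. Let $\psi:\Div(X')\to\Div(X)$ be the $\ZZ$-linear map sending $v_i'\mapsto v_i$; under the identifications with $\ZZ^n$ this is the identity. For each $i$, the class of $\psi(v_i')$ is $\tilde\phi([v_i'])$, and $\tilde\phi$ is a group homomorphism by \cref{extend-iso}. Hence the diagram
\[
\Div(X')\xrightarrow{\psi}\Div(X)\to\Pic(X)
\quad\text{and}\quad
\Div(X')\to\Pic(X')\xrightarrow{\tilde\phi}\Pic(X)
\]
commutes: both composites are homomorphisms out of a free group and agree on the basis $\{v_i'\}$.

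Next, I would compare kernels. The kernel of $\Div(X')\to\Pic(X')$ is $\im(L_{X'})$. Because $\tilde\phi$ is an isomorphism, the kernel of the second composite is also $\im(L_{X'})$. The kernel of the first composite is $\psi^{-1}(\im(L_X))$, and since $\psi$ is an isomorphism (the identity on $\ZZ^n$), this equals $\im(L_X)$ under the identification. Therefore $\im(L_{X'})=\im(L_X)$ in $\ZZ^n$, which is the desired equality of Laplacian lattices for this ordering of the vertices.

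The main thing to check is the commutativity step. It relies on $\tilde\phi$ being a homomorphism on all of $\Pic$, including negative and higher multiples of the vertices, and not merely a bijection on $V$. That is exactly what \cref{extend-iso} provides. The 2-edge-connectedness hypothesis is needed only so that $V(X)\subset\Pic^1(X)$ via \cref{rk:inj}, which makes the phrase "bijection between $V(X)$ and $V(X')$" meaningful.
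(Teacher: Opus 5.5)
Your proof is correct and is essentially the paper's argument: order the vertices so that $v_i=\tilde\phi(v_i')$, then use that $\tilde\phi$ is a group isomorphism on all of $\Pic$ to see that $\sum a_i v_i'\sim 0$ in $X'$ if and only if $\sum a_i v_i\sim 0$ in $X$. Your lift $\psi$ and the kernel comparison simply spell out the paper's chain of equivalences a bit more formally.
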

\begin{proof}
By definition, a divisor $\sum a_i v_i' \in\Div(X')$ belongs to $\im(L_{X'})$ if and only if $\sum a_i v_i'\sim 0$. As $\tilde\phi $ is a group isomorphism, this is equivalent to $\sum a_i \tilde\phi(v_i') \sim 0$, which in turn is equivalent to $\sum a_i \tilde\phi(v_i') \in \im(L_X)$. Thus, if we choose to order the vertices of $X$ by 
\[
v_i = \tilde\phi(v_i'),
\]
we see that $\sum a_i v_i' \in \im(L_{X'})$ if and only if $\sum a_i v_i \in \im(L_{X})$. 
\end{proof}

We are now ready to prove \cref{thmA}.
\begin{thm}\label{mainthm}
    Let $X$ and $X'$ be graphs without bridges and fix $D_1\in \Pic^1(X)$ and $D_1'\in \Pic^1(X')$. Let $\phi:J_{X'}\to J_X$
be an isomorphism such that for all characters $\chi$ of $J_X$, the $L$-functions defined using $D_1$ and $D_1'$ satisfy
\[
L(u,t,X,\chi) = L(u,t,X',\chi\circ\phi).
\]
Then, $\im(L_X)=\im(L_{X'})$ for a certain ordering of the vertices.
\end{thm}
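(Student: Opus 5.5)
Our plan is to reduce the theorem to \cref{lem:same}. A connected graph without bridges is 2-edge-connected, so by \cref{rk:inj} we may regard $V(X)\subset\Pic^1(X)$ and $V(X')\subset\Pic^1(X')$. It therefore suffices to show that the extension $\tilde\phi:\Pic(X')\to\Pic(X)$ of \cref{extend-iso}, built from $D_1$ and $D_1'$, maps $V(X')$ bijectively onto $V(X)$.

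\textbf{Step 1: $\tilde\phi$ preserves $h$.} Write a class of degree $d$ in $\Pic(X')$ as $D_0'+dD_1'$ with $D_0'\in J_{X'}$. By construction, $\tilde\phi(D_0'+dD_1')=\phi(D_0')+dD_1$. Expanding both $L$-functions as in the proof of \cref{prop:L-poly} and comparing coefficients of $t^d$ gives, for every $\chi$ and every $d\ge0$,
\[
\sum_{D_0\in J_X}\frac{u^{h(D_0+dD_1)}-1}{u-1}\chi(D_0)=\sum_{D_0'\in J_{X'}}\frac{u^{h(D_0'+dD_1')}-1}{u-1}\chi(\phi(D_0')).
\]
Reindex the right-hand side by $D_0=\phi(D_0')$. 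Linear independence of characters of $J_X$, applied exactly as in the proposition following \cref{def:L}, then yields $h(\phi(D_0')+dD_1)=h(D_0'+dD_1')$ for all $D_0'$ and all $d\ge0$. That is, $h(\tilde\phi(D))=h(D)$ for every class $D\in\Pic(X')$ of nonnegative degree. Negative degrees are harmless, since there both sides vanish.

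\textbf{Step 2: effective degree-one classes.} A degree-one class $D$ satisfies $h(D)\ge1$ if and only if $D$ is equivalent to an effective divisor, i.e. to a single vertex. So the set $\{D\in\Pic^1(X):h(D)\ge1\}$ is exactly the image of $V(X)$ in $\Pic^1(X)$, and likewise for $X'$. By Step 1 and the fact that $\tilde\phi$ is a degree-preserving bijection, $\tilde\phi$ restricts to a bijection between these two sets. Under the identification of \cref{rk:inj} (injectivity coming from \cref{cor:inj}), this is a bijection $V(X')\to V(X)$. \cref{lem:same} then gives $\im(L_X)=\im(L_{X'})$ once the vertices of $X$ are ordered by $v_i=\tilde\phi(v_i')$.

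The main point needing care is Step 1. We must check that the coefficient comparison and the reindexing through $\phi$ are legitimate; the $L$-functions need not be polynomials, so the comparison should be done termwise as formal power series in $t$. We must also handle the trivial character, which is automatically included in the linear independence argument. The other potential gap is the claim that bridgeless connected means 2-edge-connected, which holds by definition since every graph here is connected. Everything else is formal.
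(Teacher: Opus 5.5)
Your proposal is correct and follows essentially the same route as the paper: compare the coefficients of $t^d$, use linear independence of characters to get $h(\tilde\phi(D))=h(D)$ for all $D\in\Pic(X')$, then use that the degree-one classes with $h\ge 1$ are exactly the vertices, and apply \cref{lem:same}. Your Step 2 is slightly more complete than the paper's argument: the paper only checks $\tilde\phi(V(X'))\subseteq V(X)$, whereas you also use that $\tilde\phi$ is an $h$-preserving bijection to get surjectivity onto $V(X)$, which \cref{lem:same} needs.
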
 
\begin{proof}
    Thanks to \cref{lem:same}, it suffices to show that $\tilde\phi|_{V(X')}:V(X')\to V(X)$. Notice that 
\begin{align*}
    L(t,u,X,\chi) & = \sum_{D\in \Pic(X)} \frac{u^{h(D)}-1}{u-1}\chi(D-\deg(D)D_1)t^{\deg(D)} \\
                  & = \sum_{D_0\in J_X} \chi(D_0)\sum_{d\ge 0} \frac{u^{h(D_0+dD_1)}-1}{u-1}t^d ,    
\end{align*}
and
\begin{align*}
    L(u,t,X',\chi\circ\phi) &= \sum_{D_0\in J_{X'}} \chi(\phi (D_0))\sum_{d\ge 0} \frac{u^{h(D_0+dD_1')}-1}{u-1}t^d \\
    & = \sum_{D_0\in J_X} \chi(D_0)\sum_{d\ge 0} \frac{u^{h(\phi^{-1}(D_0)+dD_1')}-1}{u-1}t^d.
\end{align*}
Therefore, for all $d\ge0$, we have
\[
 \sum_{D_0\in J_X} \chi(D_0)\left(u^{h(\phi^{-1}(D_0)+dD_1')}-u^{h(D_0+dD_1)}\right)=0
\]
for all $\chi$. By the linear independence of the characters $\chi$,  we deduce that for all $D_0\in J_X$,
\[
h(D_0+dD_1)=h(\phi^{-1}(D_0)+dD_1').
\]
In other words, $h(D) = h(\tilde\phi( D))$ for all $D\in \Pic(X')$.

Thus, given $v\in V(X')\subset \Pic(X')$, we have that $ h(\tilde\phi (v))=h(v)\ge1$. So, $\tilde\phi(v)$ is equivalent to an effective divisor of degree one, i.e. $\tilde\phi(v)\in V(X)$ as desired. \end{proof} 

\begin{defn}\label{def:saturated}
    We say that a graph $X$ is \textbf{\textit{saturated}} if every pair of vertices of $X$ is connected by at least one edge.
\end{defn}

\begin{cor}\label{cor:iso}
    Let $X$ and $X'$ be saturated graphs without bridges and fix $D_1\in \Pic^1(X)$ and $D_1'\in \Pic^1(X')$. Let $\phi:J_{X'}\to J_X$
be an isomorphism such that for all characters $\chi$ of $J_X$, the $L$-functions defined using $D_1$ and $D_1'$ satisfy
\[
L(u,t,X,\chi) = L(u,t,X',\chi\circ\phi).
\]
Then, $X$ and $X'$ are isomorphic.
    
\end{cor}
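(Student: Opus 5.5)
The plan is to apply \cref{mainthm}, which gives $\im(L_X)=\im(L_{X'})$ after ordering the vertices so that $v_i=\tilde\phi(v_i')$, and then to reconstruct each saturated graph from its Laplacian lattice. In particular $X$ and $X'$ have the same number $n$ of vertices. Write $L_X=(\ell_{ij})$ and $L_{X'}=(\ell'_{ij})$. Equality of lattices means $L_{X'}=L_XU$ for some $U\in GL_n(\ZZ)$. Equivalently, for each pair $i,j$, the divisor $v_i - v_j$ has the same order in $\Pic(X)$ and $\Pic(X')$, and more generally the same linear relations hold among vertices. It therefore suffices to show that the edge multiplicities $m_{ij}=-\ell_{ij}$ of a saturated graph are determined by the lattice $\Lambda=\im(L_X)$ together with the labelled generators $v_1,\dots,v_n$ of $\ZZ^n$.

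The key step is to identify the columns $\pm$ of the Laplacian intrinsically inside $\Lambda$. Each column $c_i=L_Xe_i=\deg(v_i)v_i-\sum_{j\ne i}m_{ij}v_j$ has exactly one positive coordinate, and saturation makes every other coordinate strictly negative. My first attempt would be a characterization of the following kind: among nonzero vectors of $\Lambda$ whose coordinates are negative off position $i$, the column $c_i$ is the one with smallest positive $i$-th coordinate.

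To justify this, take $w=L_Xx\in\Lambda$ and normalize $x$ by adding a multiple of $\mathbf 1$ (the kernel) so that $\min_k x_k=0$. Suppose $w_j\le 0$ for all $j\neq i$, and let $S=\{k: x_k=\max x\}$. If $S\not\ni i$, summing $w$ over $S$ gives a count of edges leaving $S$ times a positive amount, so $\sum_{k\in S}w_k>0$. That contradicts $w_k\le 0$ for $k\in S$, unless $x$ is constant, in which case $w=0$. Hence $x$ attains its maximum at $i$, and a similar argument shows the nonzero minimal choice is $x=e_i$ (up to $\mathbf 1$). This is a Dhar-burning or maximum-principle argument.

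Then $w_i=\sum_j m_{ij}(x_i-x_j)\ge\deg(v_i)$, with equality iff $x_j\le x_i-1$ for all neighbours $j$. Because of saturation every $j$ is a neighbour, which pins down that among such $w$ the minimum of $w_i$ is $\deg(v_i)$, attained at $x=e_i$. Some care is needed to get uniqueness, for instance by requiring the further coordinates to be lexicographically minimal in absolute value. I expect this uniqueness to be the main obstacle, since other $x$ with $x_i=\max$ and all other $x_j\le x_i-1$ (e.g.\ $x=2e_i$ shifted) give larger $w_i$, so minimality should settle it. If that fails, I would fall back on a reduced-divisor or $v_i$-reduced argument to single out $c_i$.

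Once each $c_i$ is recovered intrinsically from $(\Lambda,\{v_k\})$, it follows that $L_X=L_{X'}$ entrywise. Since $X$ and $X'$ have no loops, the off-diagonal entries give all edge multiplicities. Hence $v_i\mapsto v_i'$ is a graph isomorphism $X\cong X'$.
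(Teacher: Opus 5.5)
Your route differs from the paper's. Once \cref{mainthm} gives $\im(L_X)=\im(L_{X'})$, the paper simply invokes Manjunath's Corollary~5 (a saturated graph is determined by its Laplacian lattice), which rests on his Delaunay-triangulation analysis. You instead reprove this special case by hand. You recover each column $c_i=L_Xe_i$ from the labelled lattice $\Lambda$ as the unique minimiser of the $i$-th coordinate over $C_i=\{w\in\Lambda\setminus\{0\} : w_j\le 0 \text{ for all } j\ne i\}$. This is elementary and self-contained. It also gives a sharper statement: the specific bijection $v_i'\mapsto\tilde\phi(v_i')$ from \cref{lem:same} is itself a graph isomorphism, since $L_X=L_{X'}$ entrywise.

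As written, though, the key inequality is not justified. Knowing only that $i\in S$ gives $x_j\le x_i$, hence $w_i\ge 0$, not $w_i\ge\deg(v_i)$. Your equality condition is also misstated: the inequality needs $x_j\le x_i-1$ for all neighbours $j$, while equality holds iff $x_j=x_i-1$ for all of them.

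What is missing is that $i$ is the \emph{strict} maximiser, and this is exactly where saturation is used. Suppose some $j\ne i$ had $x_j=\max x$. Then $w_j=\sum_k m_{jk}(x_j-x_k)$ is a sum of non-negative terms, so $w_j\le 0$ forces $x_k=x_j$ for every neighbour $k$ of $j$. By saturation that is every vertex, so $x$ is constant and $w=0$. Hence, as $x$ is integral, $x_j\le x_i-1$ for all $j\ne i$. This gives $w_i\ge\sum_{j\ne i}m_{ij}=\deg(v_i)$, with equality iff $x_j=x_i-1$ for every $j\ne i$, i.e.\ $x\equiv e_i$ modulo $\mathbf 1$ and $w=c_i$.

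Uniqueness is therefore automatic, so drop the lexicographic tie-break and the reduced-divisor fallback. A tie-break would in any case give no reason why the selected vector equals both $c_i$ and $c_i'$.

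Saturation really is needed at this point. In \cref{eg:non-iso}, with vertices labelled as in the figure, both $2e_0-2e_4$ (a column for $X$) and $2e_0-2e_1$ (a column for $X'$) lie in the common lattice and minimise the $0$-th coordinate.

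With the fix, the characterisation depends only on $\Lambda$, and both graphs are saturated, so $c_i=c_i'$ for all $i$ and your argument is complete.
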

\begin{proof}
By \cite[Corollary~5]{manjunath}, a saturated graph is completely determined by its Laplacian lattice. Hence, the corollary follows immediately from \cref{mainthm}.
\end{proof}

\begin{eg}\label{eg:non-iso}
    Consider the following bridgeless graphs.
\begin{center}    
\begin{tikzpicture}[
    vertex/.style={circle, draw, fill=white, minimum size=7mm, inner sep=0pt, font=\small},
    edge/.style={thick},scale=0.7
]

\begin{scope}
    \node[vertex] (0a) at (-1.5, 1.5) {0};
    \node[vertex] (1a) at (-1.5,-1.5) {1};
    \node[vertex] (4a) at ( 0.5, 0.0) {4};
    \node[vertex] (2a) at ( 2.5, 1.2) {2};
    \node[vertex] (3a) at ( 2.5,-1.2) {3};

    \draw[edge] (4a) -- (2a);
    \draw[edge] (4a) -- (3a);
    \draw[edge] (2a) -- (3a);

    \draw[edge] (0a) to[bend left=15]  (4a);
    \draw[edge] (0a) to[bend right=15] (4a);

    \draw[edge] (1a) to[bend left=15]  (4a);
    \draw[edge] (1a) to[bend right=15] (4a);

    \node at (0.7,-2.3) {$X$};
\end{scope}

\begin{scope}[xshift=8cm]
    \node[vertex] (0b) at (-3.0, 0.0) {0};
    \node[vertex] (1b) at (-1.0, 0.0) {1};
    \node[vertex] (4b) at ( 1.0, 0.0) {4};
    \node[vertex] (2b) at ( 3.0, 1.2) {2};
    \node[vertex] (3b) at ( 3.0,-1.2) {3};

    \draw[edge] (4b) -- (2b);
    \draw[edge] (4b) -- (3b);
    \draw[edge] (2b) -- (3b);

    \draw[edge] (1b) to[bend left=15]  (4b);
    \draw[edge] (1b) to[bend right=15] (4b);

    \draw[edge] (0b) to[bend left=15]  (1b);
    \draw[edge] (0b) to[bend right=15] (1b);

    \node at (0.0,-2.3) {$X'$};
\end{scope}

\end{tikzpicture}

\end{center}
It is clear that $X$ and $X'$ are not isomorphic since they do not share the same degree sequence. Nonetheless, under the identification of vertices as given by the labels above, they have the same Laplacian lattice since their Laplacian matrices have the same Hermitian normal form.
\end{eg}

\begin{rem}\label{rk:genuine}
Note that an isomorphism \(J_X \cong J_{X'}\) alone is not sufficient to guarantee equality of the corresponding lattices. For example, consider the following graphs:\\

\begin{center}
\begin{tikzpicture}[
    every node/.style={circle, draw, fill=black, inner sep=0pt},scale=0.8
]

\begin{scope}
    \node (a0) at (0,0) {0};
    \node (a1) at (2,0) {1};
    \node (a2) at (2,2) {2};
    \node (a3) at (0,2) {3};

    \node (a4) at (1,3) {4};
    \node (a5) at (3,1) {5};

    \draw (a0)--(a1)--(a2)--(a3)--(a0);
    \draw (a2)--(a4)--(a3);
    \draw (a1)--(a5)--(a2);
    
    \node[draw=none, fill=none] at (1,-1) {$X$};
\end{scope}

\begin{scope}[xshift=7cm]
    \node (b0) at (0,0) {0};
    \node (b1) at (2,0) {1};
    \node (b2) at (2,2) {2};
    \node (b3) at (0,2) {3};

    \node (b4) at (-1,1) {4};
    \node (b5) at (3,1) {5};

    \draw (b0)--(b1)--(b2)--(b3)--(b0);
    \draw (b0)--(b4)--(b3);
    \draw (b1)--(b5)--(b2);
    
    \node[draw=none, fill=none] at (1,-1) {$X'$};

\end{scope}

\end{tikzpicture}
\end{center}

We can explicitly compute the Smith normal forms of their corresponding Laplacians and find that they are equal. We find $J_X\cong J_X'\cong\ZZ/30 \ZZ$. However, they have the following distinct Lorenzini zeta functions:
\begin{align*}
\zeta(t,u,X)&=\frac{u^3t^6 - u^3t^5 + 5u^2t^5 - 5u^2t^4 + 12ut^4 - 6ut^3 - 5ut^2 + 12t^3 - ut + 12t^2 + 5t + 1}{(1-t)(1-ut)}, \\
\zeta(t,u,X') &= \frac{u^3t^6 - u^3t^5 + 5u^2t^5 - 4u^2t^4 - u^2t^3 + 12ut^4 - 7ut^3 - 4ut^2 + 12t^3 - ut + 12t^2 + 5t + 1}{(1-t)(1-ut)},
\end{align*}
 which means that the two graphs have distinct Laplacian lattices.
\end{rem}

 \begin{rem}\label{rk:sameLorenzini}
 Including all the $L$-functions in the hypothesis of \cref{mainthm} is necessary, which means that the $L$-functions with non-trivial characters encode some additional information of the graph not included in the Lorenzini zeta function. For example, consider the following graphs: \\
\begin{center}
\begin{tikzpicture}[
    every node/.style={circle, draw, fill=black, inner sep=0pt, minimum size=5pt},scale=0.8
]

\begin{scope}
    \node (0) at (-1.3,-0.3) {};
    \node (1) at (1.3,-0.3) {};
    \node (2) at (0,-1) {};
    \node (3) at (-2,1) {};
    \node (4) at (-1.3,2.3) {};
    \node (5) at (0,3) {};
    \node (6) at (1.3,2.3) {};
    \node (7) at (2,1) {};
    
    \draw
        (0)--(1)
        (1)--(2)
        (2)--(0)
        (0)--(3)
        (3)--(4)
        (4)--(5)
        (5)--(6)
        (6)--(7)
        (7)--(1);

    \node[draw=none, fill=none, below=8pt] at (0,-1)
        {$X$};
\end{scope}
\begin{scope}[xshift=7cm]

    \node (0) at (0,1.5) {};
    \node (1) at (-1,2.5) {};
    \node (2) at (0,3.5) {};
    \node (3) at (1,2.5) {};
    \node (4) at (-1.5,0.5) {};
    \node (5) at (-1, -1) {};
    \node (6) at (1,-1) {};
    \node (7) at (1.5,0.5) {};

    \draw
        (0)--(1)
        (1)--(2)
        (2)--(3)
        (3)--(0)
        (0)--(4)
        (4)--(5)
        (5)--(6)
        (6)--(7)
        (7)--(0);

    \node[draw=none, fill=none, below=8pt] at (0,-1)
        {$X'$};
\end{scope}

\end{tikzpicture}
\end{center}

These are two genus $2$ graphs with $8$ vertices and $20$ spanning trees. By \cref{eg:g2}, this implies that they admit the same Laurenzini zeta function. Moreover, by computing the Smith normal forms, we find that $J_X\cong J_{X'}\cong \mathbb{Z}/20\mathbb{Z}$. However, by performing an exhaustive (brute-force) computation in SageMath, we verify that there exists no permutation matrix \(P\) such that \(\operatorname{im}(L_X) = \operatorname{im}(P^{\mathsf T} L_{X'} P)\); we do so by computing and comparing the Hermitian normal forms of the corresponding matrices.
 \end{rem}

\bibliographystyle{alpha}
\bibliography{references.bib}

@article {zini12,
    AUTHOR = {Lorenzini, Dino},
     TITLE = {Two-variable zeta-functions on graphs and {R}iemann-{R}och
              theorems},
   JOURNAL = {Int. Math. Res. Not. IMRN},
  FJOURNAL = {International Mathematics Research Notices. IMRN},
      YEAR = {2012},
    NUMBER = {22},
     PAGES = {5100--5131},
     
}

@article {BV,
    AUTHOR = {Booher, Jeremy and Voloch, Jos\'e{} Felipe},
     TITLE = {Recovering algebraic curves from {$L$}-functions of {H}ilbert
              class fields},
   JOURNAL = {Res. Number Theory},
  FJOURNAL = {Research in Number Theory},
    VOLUME = {6},
      YEAR = {2020},
    NUMBER = {4},
     PAGES = {Paper No. 43, 6},
     
}

@article{BN07,
   AUTHOR = {Baker, Matthew and Norine, Serguei},
     TITLE = {Riemann-{R}och and {A}bel-{J}acobi theory on a finite graph},
   JOURNAL = {Adv. Math.},
  FJOURNAL = {Advances in Mathematics},
    VOLUME = {215},
      YEAR = {2007},
    NUMBER = {2},
     PAGES = {766--788},
}

@incollection {caporaso,
    AUTHOR = {Caporaso, Lucia},
     TITLE = {Rank of divisors on graphs: an algebro-geometric analysis},
 BOOKTITLE = {A celebration of algebraic geometry},
    SERIES = {Clay Math. Proc.},
    VOLUME = {18},
     PAGES = {45--64},
 PUBLISHER = {Amer. Math. Soc., Providence, RI},
      YEAR = {2013},
     
}

@article {Ihara,
    AUTHOR = {Ihara, Yasutaka},
     TITLE = {On discrete subgroups of the two by two projective linear
              group over {$p$}-adic fields},
   JOURNAL = {J. Math. Soc. Japan},
  FJOURNAL = {Journal of the Mathematical Society of Japan},
    VOLUME = {18},
      YEAR = {1966},
     PAGES = {219--235},
     
}

@article {Bass,
    AUTHOR = {Bass, Hyman},
     TITLE = {The {I}hara-{S}elberg zeta function of a tree lattice},
   JOURNAL = {Internat. J. Math.},
  FJOURNAL = {International Journal of Mathematics},
    VOLUME = {3},
      YEAR = {1992},
    NUMBER = {6},
     PAGES = {717--797},
      
}

@article{StarkTerras1996,
   AUTHOR = {Stark, Harold and Terras, Audrey},
     TITLE = {Zeta functions of finite graphs and coverings},
   JOURNAL = {Adv. Math.},
  FJOURNAL = {Advances in Mathematics},
    VOLUME = {121},
      YEAR = {1996},
    NUMBER = {1},
     PAGES = {124--165},
     
}

@article{StarkTerras2000,
    AUTHOR = {Stark, Harold and Terras, Audrey},
     TITLE = {Zeta functions of finite graphs and coverings. {II}},
   JOURNAL = {Adv. Math.},
  FJOURNAL = {Advances in Mathematics},
    VOLUME = {154},
      YEAR = {2000},
    NUMBER = {1},
     PAGES = {132--195},
     
}

@article{MizunoSato,
  author  = {Mizuno, Hiroshi and Sato, Iwao},
  title   = {Zeta functions of graph coverings},
  journal = {J. Combin. Theory Ser. B},
  volume  = {80},
  year    = {2000},
  pages   = {247--257}
}

@article {StarkTerras2007,
    AUTHOR = {Terras, Audrey and Stark, Harold},
     TITLE = {Zeta functions of finite graphs and coverings. {III}},
   JOURNAL = {Adv. Math.},
  FJOURNAL = {Advances in Mathematics},
    VOLUME = {208},
      YEAR = {2007},
    NUMBER = {1},
     PAGES = {467--489},
     
}

@article {Terras,
     AUTHOR = {Terras, Audrey},
     TITLE = {Zeta functions of graphs},
    SERIES = {Cambridge Studies in Advanced Mathematics},
    VOLUME = {128},
      NOTE = {A stroll through the garden},
 PUBLISHER = {Cambridge University Press, Cambridge},
      YEAR = {2011},
     PAGES = {xii+239},
     
}

@article {LM2,
    AUTHOR = {Lei, Antonio and M\"uller, Katharina},
     TITLE = {On towers of isogeny graphs with full level structures},
   JOURNAL = {Res. Math. Sci.},
  FJOURNAL = {Research in the Mathematical Sciences},
    VOLUME = {12},
      YEAR = {2025},
    NUMBER = {1},
     PAGES = {Paper No. 4, 29},
 }

@article {manjunath,
    AUTHOR = {Manjunath, Madhusudan},
     TITLE = {The {L}aplacian lattice of a graph under a simplicial distance
              function},
   JOURNAL = {European J. Combin.},
  FJOURNAL = {European Journal of Combinatorics},
    VOLUME = {34},
      YEAR = {2013},
    NUMBER = {6},
     PAGES = {1051--1070},
     
}

\end{document}